\begin{document}

\let\kappa=\varkappa
\let\epsilon=\varepsilon
\let\phi=\varphi
\let\p\partial

\def\Z{\mathbb Z}
\def\R{\mathbb R}
\def\C{\mathbb C}
\def\Q{\mathbb Q}
\def\P{\mathbb P}
\def\N{\mathbb N}
\def\L{\mathbb L}
\def\HH{\mathrm{H}}
\def\ss{X}

\def\conj{\overline}
\def\Beta{\mathrm{B}}
\def\const{\mathrm{const}}
\def\ov{\overline}
\def\wt{\widetilde}
\def\wh{\widehat}

\renewcommand{\Im}{\mathop{\mathrm{Im}}\nolimits}
\renewcommand{\Re}{\mathop{\mathrm{Re}}\nolimits}
\newcommand{\codim}{\mathop{\mathrm{codim}}\nolimits}
\newcommand{\id}{\mathop{\mathrm{id}}\nolimits}
\newcommand{\Aut}{\mathop{\mathrm{Aut}}\nolimits}
\newcommand{\lk}{\mathop{\mathrm{lk}}\nolimits}
\newcommand{\sign}{\mathop{\mathrm{sign}}\nolimits}
\newcommand{\pt}{\mathop{\mathrm{pt}}\nolimits}
\newcommand{\rk}{\mathop{\mathrm{rk}}\nolimits}
\newcommand{\SKY}{\mathop{\mathrm{SKY}}\nolimits}
\newcommand{\st}{\mathop{\mathrm{st}}\nolimits}
\def\Jet{{\mathcal J}}
\def\FC{{\mathrm{FCrit}}}
\def\sS{{\mathcal S}}
\def\lcan{\lambda_{\mathrm{can}}}
\def\ocan{\omega_{\mathrm{can}}}

\renewcommand{\mod}{\mathrel{\mathrm{mod}}}
\def\ds{\displaystyle}

\newtheorem{mainthm}{Theorem}
\newtheorem{thm}{Theorem}
\newtheorem{lem}{Lemma}
\newtheorem{prop}{Proposition}
\newtheorem{cor}{Corollary}

\theoremstyle{definition}
\newtheorem{exm}[thm]{Example}
\newtheorem{rem}{Remark}
\newtheorem{df}{Definition}


\title[Locally Constant Constructive Functions] 
{Locally Constant Constructive Functions and Connectedness of Intervals} 
\author[V.~Chernov]{Viktor Chernov}
\address{V.~Chernov, St Petersburg State Economics University, Department of applied mathematics and economico-mathematical methods \\ 21 Sadovaya, St Petersburg 191023, Russia} 
\email{viktor\_chernov@mail.ru}

\subjclass{Primary 03D78; Secondary 03F60}

\begin{abstract} We prove that every locally constant constructive function on an interval is in fact a constant function. This answers a question formulated by Andrej Bauer. As a related result we show that an interval consisting of constructive real numbers is in fact connected, but can be decomposed into the disjoint union of two sequentially closed nonempy sets.
\end{abstract}

\keywords{computable function, constructive point-set topology}

\maketitle

\section{Introduction} Constructive Topology and Constructive Analysis deal with the study of objects that can be computed by some algorithm, for example by a Turing machine. 

A {\it constructive real number\/} is a Cauchy sequence of rational numbers $\{r_n\}_{n=1}^{\infty}$ equipped with an algorithm that describes the convergence, i.e. given $\epsilon>0$ it constructs $M\in \N$ such that for all $m, n>M$ we have $|r_n-r_m|<\epsilon.$ A {\it constructive function\/} is an algorithm that transforms constructive numbers to constructive numbers. All the functions and numbers in this paper are assumed to be constructive. 

A complete separable constructive metric space can be given by specifying an algorithmically enumerable set $P$ and a constructive metric function on this set. Points of the space are algorithmically given Cauchy sequences, whose members are elements of $P$. The metric is naturally extended to the points of this space.

The subject of constructive mathematics was developed by Markov~\cite{Markov1, Markov2} and Shanin~\cite{Shanin} and their mathematical school, see Kushner~\cite{Kushner} for a nice exposition. A different but to some extend similar approach to constructive mathematics was developed by E.~Bishop~\cite{Bishop} and his followers. 

In our proofs we use the so called Markov's principle saying that: if the assumption that a decidable subset of the set of natural numbers is empty yields a contradiction, then one can produce an element of this set. This assumption is broader than the constructivism assumptions of Bishop.

The constructive counterparts of many classical results fail. For example the constructive versions of the Intermediate value theorem~\cite{Ceitin2} and the Brower fixed point theorem~\cite{Orevkov} are false.

On the other hand many surprising facts that are clearly false in the traditional versions of the subjects are true in the constructive world. For example, every  constructive function defined on real numbers is continuous~\cite{Ceitin}.

\section{Main Results} 
\begin{thm}\label{locallyconstant}
Let $f$ be a constructive locally constant real-valued function on an interval $[a,b]$ whose points are constructive real numbers, then $f$ is a constant function. 
\end{thm}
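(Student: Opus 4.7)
The plan is to show $f(x_0) = f(y_0)$ for any two points $x_0, y_0 \in [a,b]$ by a bisection argument by contradiction. Suppose $f(x_0)$ and $f(y_0)$ are apart, that is, $|f(x_0) - f(y_0)| > \epsilon$ for some rational $\epsilon > 0$.

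First I would construct recursively a nested sequence of intervals $[x_n, y_n] \subseteq [x_0, y_0]$ of length $(y_0 - x_0)/2^n$ together with positive rationals $\delta_n > 0$ such that $|f(x_n) - f(y_n)| > \delta_n$. Set $\delta_0 = \epsilon$. For the inductive step, given $[x_n, y_n]$ with $|f(x_n) - f(y_n)| > \delta_n$, take $c_n = (x_n + y_n)/2$; then classically at least one of $|f(x_n) - f(c_n)|$ and $|f(c_n) - f(y_n)|$ exceeds $\delta_n/2$. Constructively we locate such a half by computing rational approximations to $f(x_n), f(c_n), f(y_n)$ with increasing precision, and this unbounded search is guaranteed to terminate by Markov's principle applied to the decidable property "some finite-precision approximation witnesses the bound''. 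Choose the corresponding half as $[x_{n+1}, y_{n+1}]$ and let $\delta_{n+1}$ be any positive rational below $\delta_n/2$ that safely accounts for the approximation error (e.g.\ $\delta_n/3$).

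The sequences $\{x_n\}$ and $\{y_n\}$ are Cauchy with an explicit geometric modulus of convergence, so they define a common constructive real $z \in [a,b]$. By Ceitin's theorem $f$ is continuous, and by the locally constant hypothesis there exists $r > 0$ such that $f \equiv f(z)$ on $[a,b] \cap (z - r, z + r)$. For $n$ large enough that $(y_0 - x_0)/2^n < r$, both $x_n$ and $y_n$ lie in this neighbourhood, hence $f(x_n) = f(y_n) = f(z)$, which contradicts $|f(x_n) - f(y_n)| > \delta_n > 0$. Since apartness of $f(x_0)$ and $f(y_0)$ is impossible, equality holds, and $x_0, y_0$ being arbitrary gives the theorem.

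The main obstacle I expect is the inductive bisection step: classically it is obvious which half to keep, but constructively we need an algorithmic, guaranteed-to-terminate procedure for picking a side, and this termination is exactly where Markov's principle enters. A secondary bookkeeping issue is verifying that along the recursion the endpoints $x_n, y_n$ and the values $f(x_n), f(y_n)$ can always be computed to arbitrary rational precision, so that the comparisons driving the bisection are legitimately decidable; this is direct from the definitions of constructive real and constructive function, but needs to be written out carefully.
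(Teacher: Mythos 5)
Your proposal is correct, and its engine is the same as the paper's: bisect, at each stage keep the half whose endpoints have values that are provably apart (decidable after computing finitely many rational approximations), pass to the common limit $z$ of the nested intervals, and contradict local constancy at $z$ since some $[x_n,y_n]$ lands inside the neighbourhood on which $f$ is constant. The one genuine difference is the on-ramp: the paper assumes $f$ is not constant and invokes a separate lemma (an unbounded search over a dense set, justified via Ceitin's continuity theorem and Markov's principle) to produce two points with distinct values, whereas you fix arbitrary $x_0,y_0$, refute their apartness, and conclude $f(x_0)=f(y_0)$ from the stability of equality for constructive reals. Your version is arguably cleaner, as it dispenses with the paper's Lemma and with that particular appeal to Markov's principle; note also that your remaining invocation of Markov's principle in the bisection step is unnecessary, since $|f(x_n)-f(c_n)|+|f(c_n)-f(y_n)|>\delta_n$ is a positive statement that already guarantees, with an explicit modulus, that the finite-precision search succeeds (this is the standard constructive cotransitivity/approximate-splitting argument, which is all the paper uses there as well). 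Your attention to the degradation of the separation constants $\delta_n$ is a point the paper glosses over.
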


The proof requires the following Lemma.

\begin{lem}\label{helpfullemma}
Let $f$ be a constructive fucntion on a complete separable metric space $X$ which is not a
 constant function, then you can algorithmically find two points $p,q$ with $f(p)\neq f(q).$
\end{lem}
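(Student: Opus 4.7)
\bigskip

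\noindent\textbf{Proof plan.} The plan is to enumerate candidate witness pairs from a dense set and to use Markov's principle to guarantee that the search terminates.

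First I would invoke the hypothesis that $X$ is a complete separable constructive metric space to fix an algorithmic enumeration $p_1,p_2,\ldots$ of a dense set $P\subset X$ (the ``base points''). Next I would set up a Turing machine $T$ that dovetails over all pairs $(i,j)\in\N\times\N$: for each pair it computes better and better rational approximations to the constructive reals $f(p_i)$ and $f(p_j)$, and it halts, outputting $(p_i,p_j)$, as soon as it finds two rational approximations $a\approx f(p_i)$, $b\approx f(p_j)$ with $|a-b|$ large enough to exhibit the canonical positive lower bound defining the constructive inequality $f(p_i)\neq f(p_j)$. Thus the halting set of $T$ is a decidable (in fact recursively enumerable) subset of $\N$ which is nonempty precisely when we succeed in producing the required pair.

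To apply Markov's principle it then suffices to derive a contradiction from the assumption that $T$ never halts. If $T$ never halts then for every $i,j$ we cannot refute $f(p_i)=f(p_j)$; since constructive reals $x,y$ satisfy $x=y$ whenever $x\neq y$ is impossible, this gives $f(p_i)=f(p_j)$ for all $i,j$. Set $c:=f(p_1)$. Here I would appeal to the constructive continuity of $f$ (Ceitin's theorem, in the form that a constructive function on a complete separable constructive metric space is continuous): since $f$ agrees with the constant function $c$ on the dense set $P$, continuity forces $f\equiv c$ on all of $X$, contradicting the hypothesis that $f$ is not constant. Markov's principle then guarantees that $T$ actually halts, and its output is the desired pair $(p,q)$.

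The main obstacle, and the reason the proof is not entirely formal, is the passage from ``the halting procedure does not terminate'' to the genuinely constructive statement ``$f$ is constant on $P$''. One has to check carefully that the standard semi-decision procedure for the apartness relation on constructive reals, combined with Markov's principle at the level of $\N$, yields the negation-elimination $\neg(f(p_i)\neq f(p_j))\Rightarrow f(p_i)=f(p_j)$ used above, and that the resulting constancy on the dense set $P$ can be promoted to constancy on $X$ via the continuity result of Ceitin in the separable-metric-space setting rather than merely on $\R$. Once these two ingredients are in place, the dovetailing argument plus Markov's principle finishes the proof.
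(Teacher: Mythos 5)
Your proposal is correct and follows essentially the same route as the paper: dovetail over pairs from an enumerable dense set, semi-decide the apartness of the corresponding values of $f$, and use Markov's principle together with Ceitin's continuity theorem to argue that the search must terminate because non-termination would force $f$ to be constant. You have merely made explicit the appeal to Markov's principle and the negation-elimination for real-number equality that the paper leaves implicit.
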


\begin{proof} We will generate points of an enumerable everywhere dense set and compute the values of $f$ at them with better and better precision until we find two points $p,q$ with $f(p)\neq f(q).$ If we do not succeed finding two such points, then $f$ is a constant function by the Ceitin~\cite{Ceitin} continuity theorem.
\end{proof}

Now we use the Lemma to prove Theorem~\ref{locallyconstant}.

\begin{proof} We argue by contradiction and assume that $f$ is not a constant function. Then by the Lemma~\ref{helpfullemma} we find two points $p,q$ with $f(p)\neq f(q).$ Without the loss of generality we assume that $p<q.$ Take $r=\frac{p+q}{2}$ and compute $f(r)$ with a precision that guarantees that one of the facts $f(r)\neq f(p)$ and $f(r)\neq f(q)$ is true. Take one of the two halves $[p,r]$ and $[r,q]$ of the intveral for which the values at its ends are different and continue the constuction in a similar fashion.  We will get a decreasing sequence of nested intervals $[p_n, q_n]$ of length $(q-p)/2^n$ such that $f(p_n)\neq f(q_n).$

The sequences $\{p_n\}_{n=1}^{\infty}$ and $\{q_n\}_{n=1}^{\infty}$ define the same computable number $d.$ Since $f$ was locally constant, there is an open neighborhood of $d$ on which $f$ is a constant function and this neighborhood has to contain some pair $p_k, q_k.$ Thus $f(p_k)=f(q_k)$ for some $k$ and we got the contradiction.
\end{proof}

\begin{rem} The statement of the Theorem~\ref{locallyconstant} and its proof hold for computable maps of complete separable path connected constructive metic spaces.
\end{rem}

\begin{df} 
A subset of a constructive separable metric space is {\it open} if it can be realized by an enumerable set of open balls of rational radii with centers in the points of an enumerable everywhere dense set. 
A susbet is {\it closed} if it is a complement of an open subset.

We say that a susbet $S$ of a constructive separable metric space $X$ is {\it connected\/} if it is impossible to represent $S$ as a disjoint union of two nonempty open sets.
\end{df}

\begin{thm}
An interval $I=[a,b]$ consisting of computable real points is connected.
\end{thm}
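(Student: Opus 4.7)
The plan is to apply Theorem~\ref{locallyconstant} to the characteristic function of one of the two putative open parts. I would argue by contradiction: suppose $I = U \sqcup V$ where $U$ and $V$ are nonempty disjoint open subsets of $I$, each presented constructively as an enumerable family of open balls with rational radii and centers in an enumerable everywhere dense set. Define $f \colon I \to \R$ by $f(x)=0$ for $x \in U$ and $f(x)=1$ for $x \in V$. Once $f$ is shown to be a constructive locally constant function on $I$, Theorem~\ref{locallyconstant} forces $f$ to be constant, which contradicts the nonemptiness of both $U$ and $V$.

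The crux of the argument is showing that $f$ is constructive, i.e.\ that given a computable point $x \in I$ one can algorithmically decide whether $x$ lies in $U$ or in $V$. Since the centers $c$ of the enumerated balls are specific computable points and the radii $r$ are rational, and since the Cauchy sequence representing $x$ provides rational approximations to $d(x,c)$ of any desired precision, the strict inequality $d(x,c) < r$ is semi-decidable. I would therefore dovetail over the two enumerations of balls together with ever finer precision in the distance computations. Because $x$ belongs to exactly one of $U, V$, and that set is the union of its enumerated balls with strict containment, some ball in the corresponding enumeration will eventually be certified to contain $x$ strictly, so the search halts and returns the correct value of $f(x)$.

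Local constancy of $f$ is then immediate: if $x \in U$ and $B(c,r)$ is any ball in the enumeration of $U$ that strictly contains $x$, then $f \equiv 0$ on $B(c,r) \cap I$, and the symmetric statement holds on $V$. Feeding $f$ into Theorem~\ref{locallyconstant} closes the argument.

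I expect the main obstacle to be the constructivity step --- making precise the dovetailed semi-decision procedure and confirming that it always halts on inputs that are guaranteed to lie in $U \cup V$. The rest is a clean reduction: the theorem is essentially the observation that any putative decomposition of $I$ into two nonempty open parts would produce a constructive locally constant function taking two distinct values, which Theorem~\ref{locallyconstant} forbids.
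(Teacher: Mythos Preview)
Your reduction is correct and takes a genuinely different route from the paper's own argument. The paper proves connectedness by a direct bisection: after observing (via essentially the same dovetailed semi-decision procedure you describe) that membership of a rational point in $A$ or in $B$ is decidable, it picks rationals $p_1\in A$ and $q_1\in B$, repeatedly halves $[p_1,q_1]$ while keeping the endpoints in different parts, and derives a contradiction from the fact that the common limit must lie in one of the two open sets yet is approached by endpoints from both. Your argument instead packages this bisection inside Theorem~\ref{locallyconstant}: the characteristic function of $U$ is constructive (your dovetailing search terminates because any $x\in I$ lies strictly inside some enumerated ball, and the strict inequality $d(x,c)<r$ is then certified at sufficient precision, so the correct value is returned independently of which code for $x$ is supplied) and locally constant, hence constant by Theorem~\ref{locallyconstant}. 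What your approach buys is economy and a clean conceptual link---connectedness is exactly the nonexistence of nonconstant two-valued locally constant functions---so once Theorem~\ref{locallyconstant} is in hand the present result becomes a corollary. The paper's direct argument, by contrast, is self-contained and does not route through Lemma~\ref{helpfullemma} and the Ceitin continuity theorem that underlie Theorem~\ref{locallyconstant}.
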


\begin{proof} We argue by contradiction and assume that $A, B$ are disjoint nonempty open sets with $A\cup B=I.$ So every point of $I$ is either in $A$ or in $B.$ 
We will generate the set of all rational numbers and for each of the numbers we will decide if it is in $A$ or in $B.$ We get two sets of numbers $A_0$ and $B_0$. If either of $A_0=\emptyset$ or $B_0=\emptyset$ then, since the sets $A$ and $B$ are open, we will either get that $A=\emptyset$ or that $B=\emptyset$. This contradicts our assumptions. 

So both sets $A_0$ and $B_0$ are nonempty. Take an interval with one end point in  $A_0$ and the other end point in $ B_0$. Separate the interval into two subintervals of equal length and choose the subinterval for which the two ends belong to the two different sets $A_0, B_0.$ Iterating this construction we get a sequence of nested intervals $[p_n, q_n]_{n=1}^{\infty}$ such that $q_n-p_n=\frac{q_1-p_1}{2^{n-1}}$ and such that for each $n$ we have $p_n\in A$ and  $q_n\in B$. The limit point $s$ defined by these two sequences $\{p_n\}_{n=1}^{\infty}$ and $\{q_n\}_{n=1}^{\infty}$ has to be either in $A$ or in $B.$ However since both $A$ and $B$ are open sets, the whole tail of both sequences belongs to that open set. So we have a contradiciton.
\end{proof}

\begin{df} A set $S$ is {\it sequentially closed\/} if given a converging sequence $\{s_n\}_{n=1}^{\infty}$ of points in $S$ the limit point also is in $S.$
\end{df}

\begin{thm} An interval $I=[0,1]$ can be subdivided into the union of two nonempty disjoint sequentially closed subsets.
\end{thm}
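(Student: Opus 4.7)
The plan is to split $I$ by comparing with a classically defined real $\alpha\in(0,1)$ which is provably not a constructive real. Setting $A:=\{x\in I : x<\alpha\}$ and $B:=\{x\in I : x>\alpha\}$ (inequalities interpreted classically), the nonexistence of a constructive representation for $\alpha$ forces every $x\in I$ to lie strictly on one side of $\alpha$, so $A\cup B=I$. Disjointness is immediate, and nonemptiness is witnessed by $0\in A$ and $1\in B$.

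To produce $\alpha$, I would use a Specker-type construction. Fix a recursively enumerable, non-recursive set $H\subset\N$ (for instance, an index set for the halting problem) and let $h_1,h_2,\ldots$ be its algorithmic enumeration without repetition. Put $s_n:=\tfrac{1}{2}\sum_{i=1}^{n}2^{-h_i}$; this is a computable, strictly increasing sequence of rationals bounded above by $\tfrac{1}{2}$, whose classical limit $\alpha$ lies in $(0,\tfrac{1}{2})\subset(0,1)$. The key technical step is to verify that $\alpha$ is not a constructive real: if it were, then for any $k$ we could approximate $\alpha$ to within $2^{-k-3}$, find $n$ with $s_n>\alpha-2^{-k-3}$, and then conclude that $k\in H$ if and only if $k\in\{h_1,\ldots,h_n\}$, yielding a decision procedure for $H$ and contradicting its non-recursiveness.

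Finally, I would verify sequential closure of $A$: given $\{x_n\}\subset A$ converging to a constructive $x\in I$, one has $x\leq\alpha$ classically, and since $x$ is constructive while $\alpha$ is not, necessarily $x\neq\alpha$; hence $x<\alpha$ and $x\in A$. The argument for $B$ is symmetric. The main obstacle is the Specker step producing a classical real outside the constructive reals; once it is in place, the partition and the sequential-closure verification reduce to routine classical manipulations combined with the elementary fact that each constructive real comes with a computable modulus of approximation by rationals.
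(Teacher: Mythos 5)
Your proposal is correct and is essentially the paper's own argument: both partition $I$ via a Specker sequence $\{s_n\}$ into the points lying below some $s_n$ and the points lying above every $s_n$ (your $A=\{x<\alpha\}$ and $B=\{x>\alpha\}$ coincide extensionally with the paper's $\bigcup_n[0,s_n)$ and $\bigcap_n[s_n,1]$, since no constructive real equals the classical limit $\alpha$), and sequential closure of $A$ rests on the same fact that the Specker sequence has no constructive limit. The only differences are cosmetic: you spell out the Specker construction, which the paper cites, and you argue classically about $\alpha$ where the paper stays in constructive language and (implicitly) uses Markov's principle to extract the witness $m$ with $x<s_m$.
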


\begin{proof} Consider a Specker sequence $\{s_n\}_{n=1}^{\infty}$ i.e.~a strictly increasing sequence of rational numbers in the interval $[0,1].$ This sequence does not have a constructive limit~\cite{Specker}. 

Consider two sequences of sets $\{A_n\}_{n=1}^{\infty}, \{B_n\}_{n=1}^{\infty}$ where $A_n=[0,s_n)$ and $B_n=[s_n,1]$. Put $A=\cup_n A_n$ and $B=\cap_n B_n$. The set $B$ is closed and hence sequentially closed. 

The set $A$ is open but still is sequentially closed. 
Indeed consider any seqence of points $\{a_n\}_{n=1}^{\infty}$, $a_n\in A$ and let $a=\lim _{n\to \infty} a_n.$

We note that for every $n$ there is $m$ such that $s_m>a_n.$

Since the sequence $s_n$ is strictly increasing for every $n$ we have that $s_n<a$ or $a<s_{n+1}.$ If $s_n<a$ for all $n$, then we would have that the sequence $s_n$ converges to $a$. This contradcits the fact that the Specker sequence does not have a constructive limit. 

Thus there is $m$ such that $a<s_m$ and then we have $a\in A_m\subset A.$ 
So the set $A$ is sequentially closed.
\end{proof}

{\bf Acknowledgments.} 
The author is thankful to Andrej Bauer for posing the question answered in this paper. He is also grateful to Vladik Kreinovich and Yuri Matiyasevich for communicating this question and to the anonymous referee for the suggested improvements and corrections.

\end{document}